\newcommand{\onto}{{\hspace{.3em}\longrightarrow\hspace{-1.3em}\longrightarrow}\hspace{.3em}}
\newcommand{\M}{{\mathcal{M}}}
\newcommand{\FF}{{\mathcal{F}}}
\newcommand{\EE}{{\mathcal{E}}}
\newcommand{\Q}{{\mathbb Q}}
\newcommand{\Z}{{\mathbb Z}}
\newcommand{\J}{{\mathcal J}}
\newcommand{\OO}{{\mathcal{O}}}
\newcommand{\Dmod}{\mathcal{D}}
\newcommand{\PP}{\mathbb{P}}
\newcommand{\coloneqq}{:=}
\DeclareMathOperator{\mult}{mult}
\DeclareMathOperator{\Ker}{Ker}
\DeclareMathOperator{\dr}{DR}
\DeclareMathOperator{\Pic}{Pic}
\DeclareMathOperator{\rk}{rk}
\newcommand{\maxid}{{\mathfrak{m}}}
\newcommand{\hyp}{\mathbb{H}}
\newcommand{\red}{\text{red}}
\newtheorem{theorem}{Theorem}[section]
\newtheorem{lemma}[theorem]{Lemma}
\newtheorem{alphtheorem}{Theorem}
\newtheorem{variant}[theorem]{Variant}
\newtheorem{alphcorollary}[alphtheorem]{Corollary}
\theoremstyle{definition}
\newtheorem{definition}[theorem]{Definition}
\theoremstyle{definition}
\newtheorem{remark}[theorem]{Remark}
\newtheorem{remnot}[theorem]{Remark/Notation}
\newtheoremstyle{cited}{.5\baselineskip\@plus.2\baselineskip\@minus.2\baselineskip}{.5\baselineskip\@plus.2\baselineskip\@minus.2\baselineskip}{\itshape}{}{\bfseries}{\bfseries .}{5pt plus 1pt minus 1pt}{\thmname{#1}\thmnumber{~#2}\thmnote{ \normalfont#3}}
\theoremstyle{cited}
\newtheorem{citedlem}[theorem]{Lemma}
\newtheoremstyle{citeddef}{.5\baselineskip\@plus.2\baselineskip\@minus.2\baselineskip}{.5\baselineskip\@plus.2\baselineskip\@minus.2\baselineskip}{}{}{\bfseries}{\bfseries .}{5pt plus 1pt minus 1pt}{\thmname{#1}\thmnumber{~#2}\thmnote{ \normalfont#3}}
\theoremstyle{citeddef}
\begin{document}
\title[Vanishing for Hodge ideals on toric varieties]{Vanishing 
for Hodge ideals on toric varieties} 
\author{Yajnaseni Dutta}
\address{Department of Mathematics\\Northwestern University\\
Evanston, IL 60208-2730, USA}
\email{\href{mailto:ydutta@math.northwestern.edu}{ydutta@math.northwestern.edu}}
\urladdr{\url{http://www.math.northwestern.edu/~ydutta/}}
\keywords{Nadel vanishing, Hodge ideals, Toric varieties.}
\subjclass[2010]{Primary 14F17; Secondary 14J17, 32S25}

\makeatletter
  \hypersetup{
    pdfauthor=Yajnaseni Dutta,
    pdfsubject=\@subjclass,
    pdfkeywords=\@keywords
  }
\makeatother

\begin{abstract} 
In this article we construct a
Koszul-type resolution of 
the $p^{\text{th}}$ exterior power of the sheaf of holomorphic differential forms 
on smooth toric varieties and 
use this to prove a Nadel-type vanishing theorem for 
Hodge ideals associated to effective $\Q$-divisors on smooth projective 
toric varieties. 
This extends earlier results of  
Musta\c{t}\u{a} and Popa.
\end{abstract}
\maketitle
\section{Introduction}
In this paper we prove a Nadel-type vanishing statement for Hodge 
ideals on smooth projective toric varieties. This extends similar
vanishing statements in \cite{MP16} 
and \cite{MP18}.

In a series of papers, Musta\c{t}\u{a} and Popa 
thoroughly studied Hodge ideals with the goal of
 understanding singularities and Hodge theoretic properties of hypersurfaces in smooth varieties. 
 These ideals are indexed by non-negative integers, with the $0^{\text{th}}$ 
 being a multiplier ideal (see Remark \ref{rmk:mult} below). 
 In particular,
 a hypersurface $D$ in a 
smooth variety $X$ has log canonical singularities if and only if
$I_0(D)\simeq \OO_X$. 
For the sake of applications it is fundamental for 
such gadgets to satisfy vanishing theorems, 
just like multiplier ideals.

To formulate our statement, 
let $X$ be a smooth projective toric variety of dimension $n$, with torus invariant divisors $D_i$, $i=1,\cdots, d$. Let $D$ be an effective $\Q$-divisor on $X$
such that
there is an integer
$\ell$ and a section $s$ of some line bundle $M^{\otimes\ell}$
satisfying
$D = \frac{1}{\ell}H$ where $H=(s=0)$.
Then we have the following
\begin{definition}[Property \hypertarget{prop:pk}{$P_k(D)$}]
	
With the above assumption on $D$, we say that a line bundle 
$L$ on a smooth toric variety $X$, satisfies property $P_k(D)$ if
the divisors 
$L+D_{\red}-D$ and\\
$L+D_{\tau_1}+\dots+ D_{\tau_p}+D_{\red}-D$
for all $1\leq p\leq k$ and for all $\tau_i\in\{1,\cdots, d\}$ are ample.
 Moreover, when $D$ and $D_i$'s are ample, we also allow $L\sim_{\Q} D-D_{\red}$.  
\end{definition}
\begin{remnot}
 When $D$ is reduced,
 $D= D_{\red}$ and therefore Property $P_k(D)$
 does not depend on $D$. In this case, we will use the notation $P_k$ 
 to emphasise the lack of dependence on $D$.
\end{remnot}

\begin{alphtheorem}\label{main}
With the notation as above, for a fixed integer $k$ and a line bundle
$L$ on $X$  
satisfying property $P_k(D)$  
we have
\[H^i\Big(X, \omega_X\big((k+1)D_{\red}\big)\otimes L\otimes I_k(D)\Big) = 0\text{ for all } i>0.\]
\end{alphtheorem}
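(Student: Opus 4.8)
The plan is to deduce the vanishing from standard positivity vanishing on the toric variety $X$, using the theory of Hodge ideals of Musta\c{t}\u{a} and Popa to reduce the statement to the building blocks $\Omega^{p}_X(\log D_{\red})$ twisted by line bundles, and then using the Koszul-type resolution of $\Omega^p_X$ announced in the abstract to reduce those, in turn, to line bundles. Concretely, by \cite{MP16,MP18} the sheaf $\omega_X((k+1)D_{\red})\otimes I_k(D)$ is controlled by the Hodge filtration on the filtered $\Dmod_X$-module attached to the $\Q$-divisor $D=\frac1\ell H$; since $H=(s=0)$ for a section of the toric line bundle $M^{\otimes\ell}$, one can work on $X$ itself rather than on a birational model, and the filtered de Rham (or associated Koszul-type) complex exhibits $\omega_X((k+1)D_{\red})\otimes L\otimes I_k(D)$ together with sheaves of the form $\Omega^{p}_X(\log D_{\red})\otimes\OO_X(jD_{\red})\otimes L$, $0\le p\le n$ and $0\le j\le k$ --- the level-$j$ pieces being built, through the order filtration on differential operators, out of summands of $\mathrm{Sym}^{j}T_X$ --- in a single complex sitting in a bounded range of cohomological degrees and with controlled cohomology sheaves. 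It is therefore enough to prove that each such building block has no higher cohomology: the hypercohomology spectral sequence of the complex then forces $H^i(X,\omega_X((k+1)D_{\red})\otimes L\otimes I_k(D))=0$ for all $i>0$.

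\emph{The toric resolution.} On a smooth projective toric variety one has the generalized Euler sequence
\[
0\longrightarrow\Omega^1_X\longrightarrow\bigoplus_{i=1}^{d}\OO_X(-D_i)\longrightarrow\OO_X^{\oplus(d-n)}\longrightarrow 0
\]
and its dual $0\to\OO_X^{\oplus(d-n)}\to\bigoplus_{i}\OO_X(D_i)\to T_X\to 0$. Taking exterior powers of the first sequence gives a finite exact complex resolving $\Omega^{p}_X$ by direct sums of line bundles $\OO_X(-D_{\tau_1}-\dots-D_{\tau_a})$, $a\le p$, and dually symmetric powers of the second resolve $\mathrm{Sym}^{j}T_X$ by direct sums of line bundles $\OO_X(D_{\tau_1}+\dots+D_{\tau_a})$, $a\le j$, with the indices $\tau_i\in\{1,\dots,d\}$ now allowed to repeat. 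Combining this with the residue exact sequences relating $\Omega^{p}_X(\log D_{\red})$ to $\Omega^{p}_X$, each building block above is reduced to line bundles which, once the remaining twists by $L$, by powers of $\OO_X(D_{\red})$ and $\OO_X(-D)$, and by $\omega_X=\OO_X(-\sum_i D_i)$ are carried through the rounding attached to $I_k(D)$, are exactly of the form $\OO_X(L+D_{\red}-D)$ and $\OO_X(L+D_{\red}-D+D_{\tau_1}+\dots+D_{\tau_a})$, $1\le a\le k$, $\tau_i\in\{1,\dots,d\}$.

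\emph{Conclusion.} Property $P_k(D)$ says precisely that all these divisors are ample. On a complete toric variety an ample --- indeed a nef --- line bundle has no higher cohomology (Demazure vanishing), and dually, by Serre duality, an anti-nef line bundle has cohomology only in degree $n$; combining these with the hypercohomology spectral sequences of the Koszul-type resolutions yields the desired acyclicity of every building block and hence the theorem. In the extremal case $L\sim_\Q D-D_{\red}$ permitted in the definition (with $D$ and the $D_i$ ample) the divisor $L+D_{\red}-D$ is merely nef, but every divisor carrying at least one $D_{\tau_i}$ stays ample; since Demazure vanishing holds for nef bundles the argument is unchanged, and $H^{>0}(X,\OO_X)=0$ accounts for the remaining trivial term.

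\emph{Main obstacle.} The last step is routine toric cohomology; the work is in the first two. The difficulties are: (i) making the filtered $\Dmod$-module description of $I_k(D)$ for the $\Q$-divisor $D=\frac1\ell H$ concrete enough that the associated complex lives on $X$ itself, has terms of the stated shape, and sits in the correct cohomological range for the spectral-sequence argument to close --- this is where the hypothesis that $H$ is cut out by a section of a line bundle is essential; (ii) fitting the non-toric divisor $D_{\red}$ into the toric picture --- e.g.\ by enlarging the pole locus to $D_{\red}+\sum_i D_i$ and using that $\Omega^1_X(\log\sum_i D_i)$ is free, so that only restriction data along $D_{\red}$ and combinatorics of the Newton polytope of $s$ survive --- and showing that $\Omega^{p}_X(\log D_{\red})$ may be replaced, without harming the vanishing, by $\Omega^{p}_X$ (equivalently $\mathrm{Sym}^{p}T_X\otimes\omega_X$) twisted by torus-invariant data; and (iii) the bookkeeping that identifies the surviving line-bundle twists with exactly the divisors in $P_k(D)$, in particular verifying that no exterior or symmetric power of level $>k$ is forced to appear. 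It is this identification of the twists, not any single cohomology vanishing, that forms the heart of the argument.
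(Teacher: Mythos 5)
There is a genuine gap: you have replaced the essential Hodge-theoretic input with purely toric vanishing, and that substitution cannot work. Your plan is to exhibit $\omega_X((k+1)D_{\red})\otimes L\otimes I_k(D)$ as one term of a complex whose remaining terms are (after the Euler/Koszul resolution) sums of nef line bundles, and then to conclude by Demazure vanishing plus the hypercohomology spectral sequence. But the relevant complex is the graded de Rham complex $gr^F_k\dr(\M_1)$ of the filtered $\Dmod$-module $\M_1\simeq M\otimes\OO_X(*D_{\red})$, whose terms are $\Omega^p_X\otimes gr^F_{k+p-n}\M_1$; every graded piece $gr^F_j\M_1$ with $j\geq 0$ involves the Hodge ideals $I_j(D)$ and $I_{j-1}(D)$ of the \emph{non-toric} divisor $D$, not twists of $\Omega^p_X(\log D_{\red})$ by line bundles. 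So neither the acyclicity of the individual terms nor the vanishing of the hypercohomology of the complex is accessible by Demazure vanishing; if it were, the theorem would be a purely combinatorial statement independent of the singularities of $D$, which it is not --- the entire content of the statement is the correction term $I_k(D)\subsetneq\OO_X$.

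Two inputs are missing. First, the vanishing of $\mathbb{H}^{p+q}$ of the (shifted, $A$-twisted) complex in degrees $\geq n+1$ is Saito vanishing for the associated graded of the Hodge module of which $\M_1$ is a filtered direct summand (and, in the borderline case $L\sim_{\Q} D-D_{\red}$, an Artin-type vanishing using that $X\setminus D_{\red}$ is affine); you never supply a reason for this vanishing, and ``controlled cohomology sheaves'' does not provide one. Second, the remaining terms of the complex involve $gr^F_{k-r}\M_1$ for $1\leq r\leq n$, so their acyclicity after twisting must come from an induction on $k$ --- anchored at Nadel vanishing for $I_0(D)\simeq\J((1-\epsilon)D)$ and using the equivalence of the statement for $I_k(D)$ with the corresponding statement for $gr^F_k\M_1$ --- not from toric vanishing. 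The toric Koszul resolution of $\Omega^{n-r}_X$ (which you do identify correctly) enters only at this stage, to convert the twist by $\Omega^{n-r}_X$ into twists by $\OO_X(D_{\sigma_1}+\cdots+D_{\sigma_j})$ with $j\leq r$; this is exactly where property $P_k(D)$ is used, since $L(D_{\sigma_1}+\cdots+D_{\sigma_j})$ must satisfy $P_{k-r}(D)$ for the inductive hypothesis to apply. Without Saito vanishing and the induction on $k$, the spectral-sequence argument has nothing to run on.
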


In \cite{MP16, MP18} Musta\c{t}\u{a} and Popa 
showed such vanishing statements in general, for smooth projective
varieties, as well as for projective spaces and for abelian varieties
with much weaker hypotheses (see  \cite[Theorem 25.3]{MP16} and \cite[Variant 12.5, Remark 12.4]{MP18}). However their general
statement required
additional hypotheses. For instance, 
on smooth projective toric varieties, they require
$D$ to be \textsl{reduced} $(k-1)$-\textsl{log canonical} 
(i.e. $I_p(D)\simeq \OO_X(D_{\red}-\lceil D\rceil)$ 
for all $p\leq k-1$) and  
the ample line bundle
$L$ to satisfy certain positivity properties with respect to
$D$ (see \cite[Corollary 25.1]{MP16}). Here we only need to assume certain positivity properties,
depending essentially only on the toric data, on the line bundle $L$.
Our statement recovers the statement
for projective spaces as we will see in Example \ref{ex:proj}. 
\begin{remark}
 The assumption  
 that $\ell D\sim M^{\otimes \ell}$ is not necessary to define the Hodge ideal, but it
 is to ensure the gluing of the local 
$\Dmod$-modules that are used to define the Hodge ideals for $\Q$-divisors (see Remark
 \ref{rmk:localglobal} for more details). Existence of 
 such global objects is crucial for the purpose of the
 proof we present. Note that such 
 assumptions can always be realised on a finite flat
 modification of $X$ and $D$.  
\end{remark}

\begin{remark}\label{rmk:mult}
 We know from \cite[Proposition 10.1]{MP16}
 (or in the $\Q$-divisor setting 
 from \cite[Proposition 9.1]{MP18})
 that $I_0(D) \simeq \mathcal{J}((1-\epsilon)D)$, the multiplier ideal sheaf of $(1-\epsilon)D$ for 
 $0<\epsilon \ll 1$. Note that when $k=0$, Theorem \ref{main}
 is the same as Nadel vanishing for this multiplier ideal sheaf.
\end{remark}
On a smooth toric variety $X$ of dimension $n$, we have the short exact sequence (see \cite[Theorem 8.1.6]{CLS11})
\begin{equation}\label{this}0\to \
	\Omega_X^1 \to 
\bigoplus_{i=1}^d\OO_X(-D_i) \to \OO_X^{\oplus d-n}\to 0\end{equation}
where the $D_i$'s are the torus invariant divisors of $X$.  Using this Euler type short exact sequence, we
 construct a resolution of the $k^{\text{th}}$ exterior power of the sheaf
 of holomorphic differential forms $\Omega^k_X$
on $X$
(see Lemma \ref{koz:toric}).
 Then an inductive argument,
similar to that in \cite[Theorem 25.3]{MP16} for projective spaces, yields the above vanishing statement.
We first look at some examples:
\subsection*{Examples}
\begin{enumerate}
	\item \textsl{Projective spaces:}\label{ex:proj} When $X=\PP^n$
	and $\deg D=d$, our statement recovers the statement in \cite[Theorem 15.3]{MP16}(Or, 
	\cite[Variant 12.5]{MP18}), namely
	\[H^i(X, \OO_X(\ell)\otimes\OO_X(kD_{\red})\otimes I_k(D)) = 0 \text{ for all } i>0\]
	and $\ell \geq d-n-1$. Indeed, torus invariant divisors on $\PP^n$ satisfy 
	$\OO_{\PP^n}(D_i) \simeq \OO_{\PP^n}(1)$ and therefore are ample. In the above statement taking $\OO_X(\ell)\simeq\omega_X\otimes L$,
	we see that when $\ell\geq d-n-1$, $L-D_{\red}$ satisfies 
	property \hyperlink{prop:pk}{$P_k(D)$}.
\item \textsl{Products of projective spaces:} When 
	$X=\PP^{n_1}\times\PP^{n_2}$,
and $D$ is
an effective $\Q$-divisor in the class $(c,d)$, we have 
\[H^i(X,\OO_X(a,b)\otimes \OO_X(kD_{\red})\otimes I_k(D))=0\text{ for all }
i>0\] whenever $a\geq c-n_1$
	and $b\geq d-n_2$.
	Indeed, the
torus invariant divisors are all of type $(1,0)$ or $(0,1)$ and therefore nef. 
 Then a line bundle
$L\simeq \OO_X(a',b')$ $L-D_{\red}$ satisfies property \hyperlink{prop:pk}{$P_k(D)$} if $a'\geq c+1$ and 
$b'\geq d+1$. It is worth mentioning that when $D$ is reduced, we get
\[H^i(X,\OO_X( (k+1)c-n_1, (k+1)d-n_2)\otimes I_k(D))=0 \text{ for all } i>0.\]
\item \textsl{Hirzebruch surfaces: }\label{ex:fr}
When $X=F_r=\PP(\OO\oplus\OO(r))$, the Hirzebruch surface with $r\geq 0$, we have $\Pic X = \Z F \oplus \Z E$, where $E\sim E'+rF$, $E'$ is the class of the section 
of self-intersection $-r$ and $F$ is the class of a fibre. It is well known that two of the four torus invariant divisors on
$X$ lie in the class of $F$ and the remaining 
two are in the classes of $E$ and $E'$. Note that
all torus invariant divisors but $E'$ are nef. Therefore to satisfy
property $P_k$, a line bundle $L \simeq \OO_X(aF + bE)$
must satisfy that $a\geq kr$ and $b\geq 1$.
Indeed, big and nef (resp. ample) bundles on $X$ are 
given by 
 $a,b\geq 0$ (resp. $>0$) (see \cite[Theorem 2.4(3)]{BC13}
or \cite[Example 6.1.16]{CLS11}) and
property \hyperlink{prop:pk}{$P_k$} above 
mandates $L+kE'$ to be ample. Then for a reduced curve $D$ in the class $(c,d)$ we have,
\[H^i(X, \OO_X((k+1)(c+r)-2,(k+1)d-1))=0 \text{ for all }i>0.\]
\end{enumerate}
\textbf{Applications.}
As an application we 
address the classical problem of finding the
number of conditions imposed on certain spaces of hypersurfaces by the isolated singular points
on a given singular hypersurface. 
\begin{alphcorollary}\label{thm:1jet}
 Let $D$ be a reduced effective divisor on a smooth projective toric variety $X$ of dimension $n$. 
 Let $S_m(D)$ denote the set of all isolated singular points in $D$ with multiplicity at least $m$. Then
 $S_m(D)$ imposes independent conditions on 
 the hypersurfaces in
 \[H^0\left(X, \OO_X\Big(\Big(\left\lfloor \frac{n}{m}\right\rfloor+1\Big) D - \sum_iD_i\Big)\otimes L\right)\]
 for 
any line bundle $L$ satisfying property \hyperlink{prop:pk}{$P_{\lfloor\frac{n}{m}\rfloor}$}.
 
\end{alphcorollary}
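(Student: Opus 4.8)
The plan is to derive Corollary~\ref{thm:1jet} from Theorem~\ref{main} by showing that, for $k\coloneqq\left\lfloor\frac nm\right\rfloor$, the Hodge ideal $I_k(D)$ sits inside the ideal sheaf of the finite set $S_m(D)$, so that the vanishing of Theorem~\ref{main} translates into surjectivity of the relevant restriction map. Concretely, put $Z\coloneqq S_m(D)$, with its reduced structure and ideal sheaf $\mathcal I_Z$, and set $\mathcal M\coloneqq\OO_X\big((k+1)D-\sum_iD_i\big)\otimes L$. Since $X$ is smooth toric one has $\OO_X(-\sum_iD_i)\cong\omega_X$ by \cite[Theorem~8.2.3]{CLS11}, and $D=D_{\red}$ because $D$ is reduced, so $\mathcal M\cong\omega_X\big((k+1)D_{\red}\big)\otimes L$; moreover $P_k(D)$ is just $P_k$ for reduced $D$, so the hypothesis says $L$ satisfies $P_k(D)$, and Theorem~\ref{main} yields $H^i\big(X,\mathcal M\otimes I_k(D)\big)=0$ for all $i>0$. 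The statement to be proved is exactly that the restriction map $H^0(X,\mathcal M)\to H^0\big(Z,\mathcal M|_Z\big)\cong\bigoplus_{x\in S_m(D)}\C$ is surjective.

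The key local input is the estimate $I_k(D)\subseteq\mathcal I_Z$. Indeed, fix $x\in S_m(D)$; then $\mult_x(D)\ge m$, hence $(k+1)\mult_x(D)\ge(k+1)m>n$, where the strict inequality holds because $\left\lfloor\frac nm\right\rfloor+1>\frac nm$. On the other hand, the bound of Musta\c{t}\u{a}--Popa relating triviality of Hodge ideals to multiplicity (\cite{MP16}) asserts that $I_k(D)_x=\OO_{X,x}$ forces $(k+1)\mult_x(D)\le n$; so $I_k(D)_x\subsetneq\OO_{X,x}$, and as $\OO_{X,x}$ is local this means $I_k(D)_x\subseteq\maxid_x$. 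Since $I_k(D)$ is moreover trivial at every point lying off the singular locus of $D$, we conclude $I_k(D)\subseteq\mathcal I_Z$ as subsheaves of $\OO_X$.

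Now I separate the contribution of $Z$. Each point of $S_m(D)$ is isolated in the singular locus of $D$, hence an isolated point of $\operatorname{Supp}(\OO_X/I_k(D))$, so $Z$ is open and closed in this support and we may write $\OO_X/I_k(D)\cong\mathcal F_Z\oplus\mathcal F'$ with $\mathcal F_Z$ of finite length supported on $Z$ and $\operatorname{Supp}\mathcal F'$ disjoint from $Z$. The quotient map $\OO_X/I_k(D)\twoheadrightarrow\OO_X/\mathcal I_Z=\OO_Z$ kills $\mathcal F'$, so it restricts to a surjection $\mathcal F_Z\twoheadrightarrow\OO_Z$. Tensoring with the line bundle $\mathcal M$ and taking cohomology, the vanishing $H^1\big(X,\mathcal M\otimes I_k(D)\big)=0$ produces a surjection $H^0(X,\mathcal M)\twoheadrightarrow H^0\big(X,\mathcal M\otimes\mathcal F_Z\big)\oplus H^0\big(X,\mathcal M\otimes\mathcal F'\big)$, hence, post-composing with the projection to the first summand, a surjection $H^0(X,\mathcal M)\twoheadrightarrow H^0\big(X,\mathcal M\otimes\mathcal F_Z\big)$; and $H^0\big(X,\mathcal M\otimes\mathcal F_Z\big)\to H^0\big(Z,\mathcal M|_Z\big)$ is onto because it is the map on global sections of a surjection of coherent sheaves supported in dimension zero. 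The composite of these maps is the restriction map, which is therefore surjective, as required.

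The only ingredient external to this paper is the Musta\c{t}\u{a}--Popa multiplicity bound used above; apart from it, the argument is formal and mirrors the projective-space computation in \cite{MP16}. I expect the step needing the most care to be the separation of $Z$ from the rest of the singular locus in the third paragraph: the Hodge ideal $I_k(D)$ can perfectly well be nontrivial along positive-dimensional components of the singular locus of $D$, and it is precisely the isolatedness of the points of $S_m(D)$ that allows one to split off their contribution as a direct summand supported in dimension zero, to which the cohomology vanishing for $\mathcal M\otimes I_k(D)$ can be applied.
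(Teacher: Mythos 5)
Your proof is correct and follows essentially the same route as the paper: the Musta\c{t}\u{a}--Popa multiplicity bound forces $I_k(D)_x\subseteq\maxid_x$ at each point of $S_m(D)$ for $k=\lfloor n/m\rfloor$ (the paper cites this as Lemma \ref{lem:mult}), and the $H^1$-vanishing of Theorem \ref{main} then yields the surjection onto $\bigoplus_{p\in S_m(D)}\OO_X/\maxid_p$. Your third paragraph, which splits off the finite-length direct summand of $\OO_X/I_k(D)$ supported on $S_m(D)$ before passing to $\OO_Z$, carefully justifies a step the paper's proof leaves implicit --- a point that genuinely needs the isolatedness of the points of $S_m(D)$ when the singular locus of $D$ has positive-dimensional components.
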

In particular, when $X=\PP^n$ we recover \cite[Corollary H]{MP16}. We discuss 
some examples in \S \ref{ex:applications}.

We can extend this analysis to the study of $(j-1)$-jets along $S_m(D)$,
just as was done in \cite[Corollary 27.3]{MP16}.
Recall that a line bundle $M$ is said to separate 
$(j-1)$-jets along a set of points $S\subset X$ if the map 
\[H^0(X,M)\onto \bigoplus_{p\in S}\OO_X/\maxid_p^j\]
is surjective. In particular, $S$ imposes independent conditions on
$H^0(X,M)$ means that $M$ separates 0-jets along $S$. For $m\geq 3$, define

 \begin{equation}
    k_{m,j} =
    \begin{cases*}
      \left\lceil \frac{j+n-m}{m}\right\rceil & $\text{ if }  j\leq m-1$\\
      \left\lceil \frac{j+n-m}{m-2}\right\rceil &$ \text{ if }  j\geq m$\\
    \end{cases*}
  \end{equation}
	Then we obtain the following:
\begin{alphcorollary}\label{cor:jets}
With the assumptions and notation in
Corollary \ref{thm:1jet}, for $m\geq 3$ 
the space of hypersurfaces
\[H^0\Big(\OO_X\big(\big(k_{m,j}+1\big)D - \sum_iD_i\big)\otimes L\Big)\] separate	 $(j-1)$-jets along $S_m(D)$ for 
any line bundle $L$ satisfying the property \hyperlink{prop:pk}{$P_{k_{m,j}}$}.
\end{alphcorollary}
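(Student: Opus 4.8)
The plan is to run the argument of Corollary~\ref{thm:1jet} with $k=k_{m,j}$ in place of $\lfloor n/m\rfloor$, feeding in a quantitative Musta\c{t}\u{a}--Popa estimate for Hodge ideals at an isolated singular point in place of the mere nontriviality used there. First observe that on a smooth toric variety $\OO_X(-\sum_i D_i)=\omega_X$, and since $D$ is reduced, $D=D_{\red}$; hence the sheaf appearing in the statement is $M\otimes I_k(D)$ with
\[ M:=\omega_X\big((k+1)D_{\red}\big)\otimes L, \qquad k=k_{m,j}, \]
and the hypothesis on $L$ is exactly property \hyperlink{prop:pk}{$P_k$}. So Theorem~\ref{main} applies and gives $H^i\big(X,M\otimes I_k(D)\big)=0$ for all $i>0$.

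Next I would isolate the contribution of the points of $S_m(D)$. From the short exact sequence $0\to M\otimes I_k(D)\to M\to M\otimes\OO_X/I_k(D)\to 0$ and the vanishing of $H^1\big(X,M\otimes I_k(D)\big)$ we get a surjection $H^0(X,M)\onto H^0\big(X,M\otimes\OO_X/I_k(D)\big)$. For $k\ge 1$ the subscheme $Z_k=V(I_k(D))$ is supported on $D_{\mathrm{sing}}$ (the locus where $I_k(D)_x=\OO_{X,x}$ being the smooth locus of $D$), and the points of $S_m(D)$ are isolated in $D_{\mathrm{sing}}$; hence $S_m(D)$ is a finite set which is open and closed in $D_{\mathrm{sing}}$, so the part of $Z_k$ lying over it is a finite subscheme splitting off as a direct summand of $\OO_X/I_k(D)$. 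Projecting the surjection onto that summand yields
\[ H^0(X,M)\ \onto\ \bigoplus_{x\in S_m(D)}\OO_{X,x}/I_k(D)_x . \]
(If $k=0$, which can only happen when $j\le m-n$, the same works since $I_0(D)_x=\J\big((1-\epsilon)D\big)_x$ is already contained in $\maxid_x^{m-n}\subseteq\maxid_x^{j}$ at such points.)

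It now suffices to prove the local containment $I_{k_{m,j}}(D)_x\subseteq\maxid_x^{j}$ for every $x\in S_m(D)$: then $\OO_{X,x}/I_{k_{m,j}}(D)_x\onto\OO_{X,x}/\maxid_x^{j}$, and composing with the previous surjection gives precisely $H^0(X,M)\onto\bigoplus_{x\in S_m(D)}\OO_X/\maxid_x^{j}$, i.e. separation of $(j-1)$-jets along $S_m(D)$. This containment is the local estimate of Musta\c{t}\u{a}--Popa for Hodge ideals near an isolated singular point of a reduced divisor of multiplicity $m$ (cf.\ \cite{MP16}, as used for projective spaces in \cite[Corollary~27.3]{MP16}). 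The two branches of $k_{m,j}$, for $j\le m-1$ and for $j\ge m$, match the two bounds available there — the sharper estimate valid while the jet order stays below the multiplicity, and a coarser one beyond it — and the hypothesis $m\ge 3$ is exactly what keeps the second branch meaningful ($m-2>0$). One then checks that $k_{m,j}$ is the least integer for which the relevant bound already reaches $\maxid_x^{j}$, a direct computation with the two formulas.

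\textbf{Main obstacle.} Once Theorem~\ref{main} and the Musta\c{t}\u{a}--Popa local estimates are in hand the proof is essentially bookkeeping; the two places that need care are (i) the localization step — one must cleanly separate the finite part of $Z_k$ sitting over $S_m(D)$ from the remainder of $D_{\mathrm{sing}}$, which may be positive-dimensional, and this is precisely what the openness of $S_m(D)$ inside $D_{\mathrm{sing}}$ buys — and (ii) the arithmetic, namely verifying that each branch of $k_{m,j}$ is exactly the threshold at which the corresponding local estimate produces $\maxid_x^{j}$.
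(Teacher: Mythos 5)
Your proposal is correct and follows essentially the same route as the paper: apply Theorem \ref{main} with $k=k_{m,j}$ to get the $H^1$-vanishing, then use the Musta\c{t}\u{a}--Popa local estimates (collected in Lemma \ref{lem:mult}) to get $I_{k_{m,j}}(D)_p\subseteq\maxid_p^{j}$ at each $p\in S_m(D)$, which yields the surjection onto $(j-1)$-jets. The only point the paper makes explicit that you leave as ``bookkeeping'' is the monotonicity check handling points of $S_m(D)$ whose multiplicity is strictly greater than $m$ (there one uses that the same $k_{m,j}$ equals $k_{m',j'}$ for some $j'\geq j$, so the containment only improves).
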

\subsection*{Acknowledgements}
I am greatly thankful to my advisor Mihnea Popa for
suggesting the problem and for his constant support
and encouragement
throughout the preparation of this paper. I would also like to 
thank Mircea Musta\c{t}\u{a} for various discussions regarding toric
varieties and Emanuel Reinecke for various comments on an earlier version of this draft. Lastly, I would like to thank both Mircea Musta\c{t}\u{a} and Mihnea Popa for spurring my interest in this area with their 
 series of articles on Hodge ideals.  
\section{Preliminaries}
\subsection{Eagon-Northcott complexes}
We first discuss the general theory behind 
the construction of a Koszul-type complex that
will be used
in the proof of Theorem \ref{main}. The main reference for this section is \cite[Appendix B]{Laz04}.
The following lemma is surely well-known to experts, nonetheless we include the proof for completeness.\\

Consider a short exact sequence of vector bundles on a variety $X$:
\begin{equation}\label{basic}0\to \Omega\to \EE \to \FF \to 0\end{equation}
such that $\rk \EE = e$, $\rk \FF = f$ with $e>f$. 
Hence, $\rk \Omega = e-f$. We denote $\bigwedge^p\Omega =: \Omega^p$.  
We then have the following resolution
of $\Omega^p$ in terms of $\EE$ and $\FF$ (see also 
[\textit{loc.\ cit.} Appendix B.2]):
\begin{lemma}\label{ENp}
The following complex is exact and gives a resolution for $\Omega^p$:
\begin{equation*}
	0\to\bigwedge^e\EE\otimes S^{e-p-f}\FF^{\vee}\otimes\big(\bigwedge^f\FF\big)^{\vee}\to \dots\to \bigwedge^{p+f+j}\EE\otimes S^j\FF^{\vee}\otimes\big(\bigwedge^f\FF\big)^{\vee} 
\to \dots\to\bigwedge^{p+f}\EE\otimes\big(\bigwedge^f\FF\big)^{\vee}
\end{equation*}
where $S^j\FF^{\vee}$ denotes the $j^{\text{th}}$ symmetric
power of $\FF^{\vee}$.
\end{lemma}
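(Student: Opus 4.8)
The plan is to write down the differentials of the complex by hand, check that consecutive maps compose to zero, and then deduce exactness by passing to a point where the sequence (\ref{basic}) splits. Write $\mathcal{C}_\bullet$ for the complex in the statement, so $\mathcal{C}_j=\bigwedge^{p+f+j}\EE\otimes S^j\FF^\vee\otimes(\det\FF)^\vee$ with $\det\FF=\bigwedge^f\FF$.

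\emph{The maps.} First I would build the differentials. The surjection $\EE\to\FF$ gives, for each $q$, a morphism $\bigwedge^q\EE\to\bigwedge^{q-1}\EE\otimes\FF$ (comultiplication on $\bigwedge^\bullet\EE$ followed by $\EE\to\FF$ on the last factor); composing this with the canonical contraction $\FF\otimes S^j\FF^\vee\to S^{j-1}\FF^\vee$ and carrying $(\det\FF)^\vee$ along defines $d_j\colon\mathcal{C}_j\to\mathcal{C}_{j-1}$. Then $d_{j-1}\circ d_j=0$ for a formal reason: the two $\FF$-factors pulled out by the two comultiplications are contracted symmetrically against $S^\bullet\FF^\vee$ but occur antisymmetrically in $\bigwedge^\bullet\EE$, so the two resulting terms cancel. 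For the augmentation, the Koszul filtration of $\bigwedge^{p+f}\EE$ attached to (\ref{basic}) has $\Omega^p\otimes\det\FF$ as its lowest graded quotient, which gives a surjection $\varepsilon\colon\mathcal{C}_0=\bigwedge^{p+f}\EE\otimes(\det\FF)^\vee\twoheadrightarrow\Omega^p$, and one checks $\varepsilon\circ d_1=0$.

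\emph{Reduction to the split case.} Exactness of $\mathcal{C}_\bullet$ and the identification $\coker(d_1)\cong\Omega^p$ via $\varepsilon$ are local assertions, so I may assume (\ref{basic}) splits: $\EE\cong\Omega\oplus\FF$ with $\FF$ trivial and $\EE\to\FF$ the projection. Then $\bigwedge^{p+f+j}\EE\cong\bigoplus_a\bigwedge^a\Omega\otimes\bigwedge^{p+f+j-a}\FF$, each $d_j$ leaves the $\Omega$-degree $a$ untouched, and the whole complex becomes $\bigoplus_a(\bigwedge^a\Omega\otimes K^{(a)}_\bullet)$, where, with $c:=p+f-a$,
\[
K^{(a)}_j\ =\ \textstyle\bigwedge^{c+j}\FF\otimes S^j\FF^\vee\otimes(\det\FF)^\vee ,
\]
the differential being contraction (a term with $c+j<0$ or $j<0$ is $0$). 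For $a<p$ one gets $c>f$ and $K^{(a)}_\bullet=0$; for $a=p$ one gets $c=f$ and $K^{(p)}_\bullet$ is the single nonzero term $\det\FF\otimes(\det\FF)^\vee\cong\OO_X$ in degree $0$, which $\varepsilon$ sends isomorphically onto the $a=p$ summand $\Omega^p$. So the whole thing reduces to showing $K^{(a)}_\bullet$ is exact whenever $p<a\le e-f$, i.e.\ whenever $c<f$.

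\emph{The strands.} Dualizing and dropping the harmless determinant twist, $(K^{(a)}_\bullet)^\vee$ is the complex
\[
0\to\textstyle\bigwedge^{c}\FF^\vee\to\bigwedge^{c+1}\FF^\vee\otimes S^1\FF\to\cdots\to\bigwedge^{f}\FF^\vee\otimes S^{f-c}\FF\to0,\qquad \eta\otimes g\mapsto\textstyle\sum_i(\xi_i\wedge\eta)\otimes(e_i g),
\]
for dual bases $\{e_i\}$ of $\FF$, $\{\xi_i\}$ of $\FF^\vee$. This is the graded-degree-$c$ strand of the Koszul complex $\bigwedge^\bullet\FF^\vee\otimes S^\bullet\FF$ of $\operatorname{id}_\FF=\sum_i\xi_i\otimes e_i\in\FF^\vee\otimes\FF$ (grading $\FF^\vee$ in degree $1$, $\FF$ in degree $-1$), where I use characteristic $0$ to identify $S^j\FF$ with $(S^j\FF^\vee)^\vee$. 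Since $\operatorname{id}_\FF$ cuts out the origin in the fibres of $\FF$, a locus of codimension $f=\rk\FF$, this Koszul complex is acyclic outside the single graded degree $c=f$; hence $K^{(a)}_\bullet$ is exact for $c<f$. Reassembling the strands shows $\mathcal{C}_\bullet$ is exact and that $\varepsilon$ realizes $\coker(d_1)$ as $\Omega^p$, i.e.\ the claimed resolution.

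\emph{The hard part.} There is essentially no geometric content; the difficulty is bookkeeping — pinning down the comultiplication, the contraction and the determinant twists with consistent signs so that $d^2=0$ and $\varepsilon\circ d_1=0$, and then invoking the acyclicity of the Koszul strand in precisely the form required (alternatively one cites the Eagon–Northcott complex of \cite[Appendix B.2]{Laz04} and matches terms). The single hypothesis that genuinely matters is characteristic $0$, used above to dualize symmetric powers; over $\C$ it costs nothing.
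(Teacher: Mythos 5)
Your proof is correct, but it takes a genuinely different route from the paper's. The paper outsources exactness of the full Eagon--Northcott complex $EN_p$ to \cite[Theorem B.2.2]{Laz04} and then spends all of its effort identifying $\Ker\big(\bigwedge^p\EE\to\bigwedge^{p-1}\EE\otimes\FF\big)$ with $\Omega^p$: it realizes this kernel as $\pi_*\big(\bigwedge^pK\big)(p)$ on the projective bundle $\PP(\FF)$, filters $\bigwedge^pK(p)$ with graded pieces $\pi^*\Omega^a\otimes\Omega^{p-a}_{\PP(\FF)/X}(p-a)$, and kills the unwanted pieces using $\pi_*\Omega^{j}_{\PP(\FF)/X}(j)=0$ for $j>0$ (via the relative Koszul resolution and cohomology of twists on projective space). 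You instead work entirely downstairs: you write the differentials by hand, split the sequence locally, and decompose the complex into graded Koszul strands $\bigwedge^a\Omega\otimes K^{(a)}_\bullet$, observing that the strands with $a>p$ are (up to the determinant twist and duality) exactly the positive-degree strands of the Koszul resolution of the residue field over $\operatorname{Sym}\FF$, hence exact, while the $a=p$ strand contributes $\Omega^p$ and the $a<p$ strands vanish. Your argument is more self-contained and more elementary --- it needs neither the citation nor any projective-bundle geometry --- at the price of the sign and normalization bookkeeping you defer (checking $d^2=0$ and that the global comultiplication-plus-contraction differential really restricts to the strand differential under a local splitting), and of the characteristic-zero identification $S^j\FF^\vee\cong(S^j\FF)^\vee$, which you correctly flag and which is harmless over $\C$. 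Both arguments are complete; yours in effect reproves the Lazarsfeld input rather than quoting it.
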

\begin{proof}
Since the morphism $\EE\to\FF$ is a map of vector bundles, 
we obtain by \cite[Theorem B.2.2.]{Laz04}, that the 
$p^{\text{th}}$ 
Eagon-Northcott complex ($EN_p$)
\begin{multline*}
0\to\bigwedge^e\EE\otimes S^{e-p-f}\FF^{\vee}\otimes\big(\bigwedge^f\FF\big)^{\vee}\to \dots\to \bigwedge^{p+f+j}\EE\otimes S^j\FF^{\vee}\otimes\big(\bigwedge^f\FF\big)^{\vee} \to\cdots\to \bigwedge^{p+f}\EE\otimes \big(\bigwedge^f\FF\big)^{\vee}\to\\
\to \bigwedge^p\EE\xrightarrow{\phi} \bigwedge^{p-1}\EE\otimes \FF\to \cdots\to E\otimes S^{p-1}\FF\to S^p\FF\to 0
\end{multline*}
is exact. 

To determine the kernel of $\phi\colon\bigwedge^p\EE\xrightarrow{} \bigwedge^{p-1}\EE\otimes \FF$ above,
we need to analyse the construction of $(EN_p)$.
Consider the projective
bundle
$\PP(\FF)\coloneqq \underline{\text{Proj}}\big(\bigoplus_iS^i\FF\big)$
with the map $\pi\colon \PP(\FF) \to X$
and the map of vector bundles
\[\pi^*\EE(-1)\to \OO_{\PP(\FF)}.\]
Denote by $K:= \Ker\big(\pi^*\EE(-1)\to \OO_{\PP(\FF)}\big)$ the kernel vector bundle. Then 
\[\Ker(\phi) = \Ker \Big(\pi_*\bigwedge^p\pi^*\EE \xrightarrow{\phi} \pi_*\bigwedge^{p-1}\pi^*\EE(1)\Big) \simeq \pi_*(\bigwedge^pK)(p).\]
But the last term is isomorphic to $\Omega^p$.
Indeed,
snake lemma applied to the following exact grid:
\begin{equation*}
\begin{tikzcd}
&&0\arrow{d}&0\arrow{d}&\\
&&\pi^*\Omega(-1) \arrow{d}& K\arrow{d}\\
&0\arrow{r}& \pi^*\EE(-1) \ar[equal]{r}\arrow{d}&\pi^*\EE(-1)\arrow{d}\arrow{r} & 0\\
0 \arrow{r} & \Omega^1_{\PP(\FF)/X}\arrow{r}\ar[equal]{d} 
& \pi^*\FF(-1) \arrow{d}\arrow{r}&\OO_{\PP(\FF)}\arrow{d}\arrow{r} &0 \\
&\Omega^1_{\PP(\FF)/X} &0&0&
\end{tikzcd}
\end{equation*}
gives us the short exact sequence
\[0\to \pi^*\Omega\to K(1)\to \Omega_{\PP(\FF)/X}(1)\to 0.\]
(In the above diagram, the horizontal short exact sequence is the relative Euler sequence and the middle vertical sequence is the pullback of (\ref{basic}) under a smooth morphism.)\\
Now for a fixed $p$ we have a filtration 
\[(\bigwedge^pK)(p)=F_0\supseteq F_1\supseteq \dots\supseteq F_{p+1}= 0\]
such that \[F_k/F_{k+1} = \pi^*\Omega^k \otimes \Omega_{\PP(\FF)/X}^{p-k}(p-k).\]
We assert that \[\pi_*\Omega_{\PP(\FF)/X}^{p-k}(p-k)=0 \text{ for all }p-k\geq 0.\] Granted this, 
$\pi_*F_{k+1} \simeq \pi_* F_{k}$
for all $k\leq p$. In particular, $\Omega^p \simeq \pi_*F_p\simeq \pi_*F_0 \simeq \pi_*(\bigwedge^pK(p)).$

The assertion follows from the Koszul resolution of 
$\pi_*\Omega_{\PP(\FF)/X}^{k}(k)$, namely:
\[\bigwedge^{f}\pi^*\FF(-k+f)\to\cdots\to\bigwedge^{k+1}\pi^*\FF(-1) \to \Omega_{\PP(\FF)/X}^{k}(k)\to 0\]
and the equality
$R^i\pi_*\bigwedge^{k+j}\pi^*\FF(-j) = 0$ for all $i$ and for 
all $1\leq j\leq f-1$ (see \cite[Exer. III.8.4.(c)]{Har77}). 

\end{proof}

We apply the above lemma to smooth toric varieties to obtain
a resolution of $\Omega_X^p$.
\begin{lemma}\label{koz:toric}
Let $X$ be a smooth toric variety with
torus invariant divisors $D_i$, $i=1,...,d$ then, $\Omega_X^{p}$ admits the following
Koszul-type resolution:
\begin{multline*}
0\to \bigoplus\omega_X\to\dots\to\bigoplus\bigoplus_{\sigma\in S_{n-p-j}}\omega_X\big(D_{\sigma_1}+\dots+D_{\sigma_{n-p-j}}\big)\to\\
\dots\to \bigoplus \bigoplus_{\sigma\in S_{n-p}}\omega_X\big(D_{\sigma_1}+\dots+ D_{\sigma_{n-p}}\big)\to \Omega_X^{p}\to 0.
\end{multline*}
Here $S_j$ is the set of all ordered sequences of length $j$ in $\{1,\dots,d\}$.
\end{lemma}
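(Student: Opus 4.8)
The plan is to feed the Euler-type sequence (\ref{this}) into Lemma~\ref{ENp}. Reading (\ref{this}) as an instance of (\ref{basic}), we take $\Omega=\Omega_X^1$, $\EE=\bigoplus_{i=1}^d\OO_X(-D_i)$ and $\FF=\OO_X^{\oplus d-n}$, so that $e=d$, $f=d-n$, and $e-f=n=\rk\Omega_X^1$; the hypothesis $e>f$ holds as soon as $n\geq 1$, the cases $n=0$ and (more generally) $p>n$ being trivial since then $\Omega_X^p=0$. For $0\leq p\leq n$, Lemma~\ref{ENp} yields an exact complex resolving $\Omega_X^p=\bigwedge^p\Omega_X^1$ whose general term is $\bigwedge^{p+f+j}\EE\otimes S^j\FF^{\vee}\otimes(\bigwedge^f\FF)^{\vee}$; it then remains only to rewrite these terms, after which the differentials of the Eagon--Northcott complex are transported along the resulting isomorphisms and exactness is preserved.

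Three identifications do the job. First, $\FF$ is trivial, so $(\bigwedge^f\FF)^{\vee}\cong\OO_X$ and $S^j\FF^{\vee}$ is a trivial bundle of rank $\binom{j+d-n-1}{d-n-1}$; tensoring by the latter simply replaces a sheaf by that many copies of itself, which accounts for the outer direct sum in the assertion. Second, since $\EE$ is a sum of line bundles, $\bigwedge^m\EE\cong\bigoplus_{I}\OO_X\big(-\sum_{i\in I}D_i\big)$ with $I$ running over the $m$-element subsets of $\{1,\dots,d\}$. Third, on a smooth toric variety $\omega_X\cong\OO_X\big(-\sum_{i=1}^d D_i\big)$ (see \cite[Theorem~8.2.3]{CLS11}), so if $J=\{1,\dots,d\}\setminus I$ is the complement of $I$ then $\OO_X\big(-\sum_{i\in I}D_i\big)\cong\omega_X\big(\sum_{l\in J}D_l\big)$.

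Putting these together: in the term indexed by $j$ we have $m=p+f+j=p+(d-n)+j$, so the complement $J$ of an $m$-element subset has size $d-m=n-p-j$, and the term becomes a direct sum of copies of $\omega_X\big(D_{\sigma_1}+\dots+D_{\sigma_{n-p-j}}\big)$ over the $(n-p-j)$-element subsets of $\{1,\dots,d\}$ --- precisely the term displayed in Lemma~\ref{koz:toric} (the indexing set $S_{n-p-j}$ being read as those subsets, or equivalently the strictly increasing length-$(n-p-j)$ sequences, and the multiplicity coming from $S^j\FF^{\vee}$ absorbed into the outer $\bigoplus$). At the left end, $j=e-p-f=n-p$ forces $m=d$, so $\bigwedge^e\EE=\bigwedge^d\EE\cong\omega_X$ and the top term is a direct sum of copies of $\omega_X$. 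The only point demanding any care is this matching of multiplicities and indexing conventions, and it is harmless for the sequel: Theorem~\ref{main} uses only which line bundles $\omega_X(D_{\tau_1}+\dots+D_{\tau_p})$ occur in the resolution, and Property $P_k(D)$ is imposed for all choices of the $\tau_i$.
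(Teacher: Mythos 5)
Your proposal is correct and follows exactly the paper's own route: apply Lemma~\ref{ENp} to the Euler-type sequence (\ref{this}) and rewrite each term using $\omega_X\cong\OO_X(-\sum_i D_i)$; the paper states this in two lines and you have simply supplied the bookkeeping (triviality of $\FF$, decomposition of $\bigwedge^m\EE$ into line bundles indexed by subsets, passage to complementary index sets). No gaps.
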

\begin{proof}
Lemma \ref{ENp} applied to the short exact sequence (\ref{this}) gives the following resolution:
\begin{multline*}
0\to \bigoplus\bigwedge^{d}\Big( \bigoplus_{i=1}^d\OO_X\big(-D_i\big)\Big)\to
\dots\to\bigoplus\bigwedge^{d-n+p+j}\Big(\bigoplus_{i=1}^d\OO_X\big(-D_i\big)\Big)\to \\
\cdots\to\bigoplus\bigwedge^{d-n+p}\Big(\bigoplus_{i=1}^d\OO_X\big(-D_i\big)\Big)\to \Omega_X^{p}\to 0.
\end{multline*}
Since $\omega_X\simeq \OO_X(-\sum_{i=0}^dD_i)$ \cite[Theorem 8.2.3.]{CLS11},
we can rewrite each term in the above long exact sequence as the 
corresponding one from the 
long exact sequence in the statement.
\end{proof}
\subsection{Preliminaries on Hodge ideals}
Let $D$ be a reduced effective divisor on a smooth variety $X$. Hodge ideals associated to $D$ arise as a measure of the deficit between the Hodge filtration and the pole-order filtration on the Hodge module
$\OO_X(*D)\coloneqq \bigcup_{k\geq 0}\OO_X(kD)$. If $D$ is
smooth, one has $F_{k}\OO_X(*D) = \OO_X((k+1)D)$, in other words $I_k(D) = \OO_X$. 

Similarly, when $D$ is an effective $\Q$-divisor
and $H$ is an integral divisor such that $D = (1-\beta)H$
with $\beta<1$, one
can define Hodge ideals associated to $D$. However
in this case one needs to consider the rank 1 free $\OO_X(*D_{\red})$-module generated by $h^{\beta}$, where $h$ is a local equation of $H$.
 It turns out that these local $\Dmod$-modules appear 
 as direct summand of certain Hodge modules, namely the pushforward of 
the sheaf of meromorphic functions with poles along 
the preimage of $D_{\red}$, under some local 
cyclic cover along $h$ (see \cite[Lemma 2.6]{MP18}). Therefore one can
 make sense of a 
 filtration on $\OO_X(*D_{\red})h^{\beta}$ induced
from the Hodge filtration of the ambient Hodge module. 
Moreover this filtration coincides with the Hodge filtration on $\OO_X(*D)$ whenever
$D$ is reduced. We refer the reader to 
\cite{MP18} for details.

\begin{definition}
	Let $D$, $H$ and $h$ be as before. Then
	for an integer $k\geq 0$, the $k^{\text{th}}$-Hodge ideal
	associated to $D$ is defined by the 
	following equation:
  \[F_k\OO_X(*D_{\red})h^{\beta} =
  I_k(D)\otimes_{\OO_X} \OO_X(kD_{\red}+H)h^{\beta}.\]
  The filtration $F$ coincides with the Hodge filtration 
	when $D$ is reduced.
\end{definition}
\begin{remark}\label{rmk:localglobal}
The local $\Dmod$-modules $\OO_X(*D_{\red})h^{\beta}$ do not, in general,
glue to a global object. However by \cite[Remark 2.14]{MP18}, $I_k(D)$ is 
independent of the choice of $h$ and $\beta$ and hence is defined globally. 
The additional assumptions on $D$, in the statement of Theorem \ref{main}, ensure a global object. In other words, when we have
$\beta = \frac{\ell - 1}{\ell}$
and $\OO_X(H)\simeq M^{\otimes \ell}$ for some 
integer $\ell$ and some
line bundle $M$ on $X$, the local
$\Dmod$-modules $\OO_X(*D)h^{\beta}$ glue.
	Following 
	\cite[\S 5]{MP18}, we denote this global $\Dmod$-module by
	$\M_1$. As an $\OO_X$-module, we have
	\[\M_1\simeq M\otimes_{\OO_X}\OO_X(*D_{\red}).\]
	For the proof of Theorem \ref{main} that we present, it is crucial
	that we work with a globally defined $\Dmod$-module. 
\end{remark}

The induced Hodge filtration on $\M_1$ is given by (see, for instance,
the proof of \cite[Theorem 12.1]{MP18}) 
	\[F_k\M_1\simeq M(-D_{\red})\otimes_{\OO_X}\OO_X((k+1)D_{\red})\otimes I_k(D)\]
to which one can associate its
filtered deRham complex $\dr(\M_1)$.
	\begin{definition}[The de-Rham complex]
For any non-negative integer $k$, the $k^{\text{th}}$ filtered piece of the de-Rham complex of $\M_1$ is defined by 
\[
F_k\dr(\M_1) = [F_k\M_1\to \Omega_X^1 \otimes F_{k+1}\M_1\to\cdots\to \Omega^n_X\otimes F_{k+n}\M_1]
\]
and the associated graded complex is given by
\[
gr^F_k\dr(\M_1) = [gr^F_k\M_1\to \Omega_X^1 \otimes gr^F_{k+1}\M_1\to\cdots\to \Omega^n_X\otimes gr^F_{k+n}\M_1]
\]
as complexes in degrees $-n,\dots,0$.
\end{definition}
The higher hypercohomologies of the associated graded complex 
of Hodge modules satisfy a vanishing statement due to Saito 
(see \cite[\S 2.g]{Sai90}, also see \cite{Sch14}, \cite{Pop16}). 
As a result, one also obtains vanishing statements for the
associated graded complex of the filtered direct summand $\M_1$. We therefore have:
\begin{citedlem}[(Saito vanishing)]\label{thm:svanish}
Let $A$ be an ample line bundle on $X$. Then,
\[\hyp^i(X, gr^F_k\dr(\M_1)\otimes A) = 0 \text{ for all } i>0, k\geq 0\]
\end{citedlem}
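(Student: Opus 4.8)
The plan is to reduce the statement to Saito's vanishing theorem for Hodge modules, using that $\M_1$ occurs as a filtered direct summand of the filtered $\Dmod_X$-module underlying a genuine Hodge module on $X$. Recall Saito's theorem in the form we need: if $\mathcal{N}$ is a Hodge module on the smooth projective variety $X$, with underlying filtered $\Dmod_X$-module $(\mathcal{N},F_\bullet\mathcal{N})$, and $A$ is an ample line bundle on $X$, then
\[\hyp^i\big(X, gr^F_k\dr(\mathcal{N})\otimes A\big)=0 \qquad\text{for all } i>0 \text{ and all } k;\]
this is \cite[\S 2.g]{Sai90} (see also \cite{Sch14}, \cite{Pop16}). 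It therefore suffices to realise $gr^F_k\dr(\M_1)$, for each $k$, as a direct summand of $gr^F_k\dr(\mathcal{N})$ for a suitable such $\mathcal{N}$.

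First I would invoke the construction recalled in Remark \ref{rmk:localglobal} (following \cite[Lemma 2.6]{MP18} and \cite[\S 5]{MP18}): with $\ell$, $M$, $H$, $h$ and $\beta=\tfrac{\ell-1}{\ell}$ as in the hypotheses of Theorem \ref{main}, the sheaf of meromorphic functions with poles along the preimage of $D_{\red}$ on the $\ell$-fold cyclic cover of $X$ attached to $\OO_X(H)\simeq M^{\otimes\ell}$ underlies a Hodge module whose direct image $\mathcal{N}$ to $X$ is again a Hodge module; the natural $\Z/\ell\Z$-action on the cover decomposes $\mathcal{N}$ \emph{as a Hodge module} --- in particular as a filtered $\Dmod_X$-module --- into a direct sum of character eigenpieces, the $\beta$-eigenpiece being exactly $(\M_1,F_\bullet\M_1)$. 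Thus $(\M_1,F_\bullet\M_1)$ is a filtered direct summand of $(\mathcal{N},F_\bullet\mathcal{N})$.

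Next I would note that the functor $gr^F_k\dr(-)$ is additive and carries this decomposition to a direct sum decomposition of complexes of $\OO_X$-modules: the differentials $gr^F_j\mathcal{N}\to\Omega_X^1\otimes gr^F_{j+1}\mathcal{N}$ are $\OO_X$-linear maps built from the $\Dmod_X$-action, and the eigenspace projectors commute with that action, so they respect the differentials. Hence $gr^F_k\dr(\M_1)$ is a direct summand of $gr^F_k\dr(\mathcal{N})$. Tensoring with the locally free sheaf $A$ and passing to hypercohomology preserves direct summands, so $\hyp^i(X, gr^F_k\dr(\M_1)\otimes A)$ is a direct summand of $\hyp^i(X, gr^F_k\dr(\mathcal{N})\otimes A)$, which vanishes for $i>0$ by Saito vanishing. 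This proves the lemma.

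The only delicate point --- and the main obstacle --- is ensuring the eigenspace decomposition is taken in the category of Hodge modules (equivalently, of filtered $\Dmod_X$-modules), not merely of $\OO_X$-modules, so that the filtration $F_\bullet\M_1$ really is the one inherited from a Hodge module and Saito's theorem applies verbatim; this is exactly what \cite[Lemma 2.6]{MP18} and \cite[\S 5]{MP18} supply. It is also worth recording that the cyclic cover map is finite, so that everything remains on $X$ and the ample bundle appearing in Saito's statement is the given $A$, with no positivity to descend from the cover.
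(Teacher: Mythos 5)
Your argument is correct and is exactly the route the paper takes: the paper justifies this cited lemma in one sentence by invoking Saito's vanishing theorem for Hodge modules and the fact (from Remark \ref{rmk:localglobal} and \cite[Lemma 2.6]{MP18}) that $(\M_1,F_\bullet\M_1)$ is a filtered direct summand of the pushforward Hodge module from the cyclic cover. Your write-up simply makes explicit the compatibility of the eigenspace decomposition with the filtered de Rham functor, which the paper leaves implicit.
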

The following vanishing is a consequence of Artin vanishing (see \cite[Proposition 22.1]{MP16} for instance):
\begin{lemma}\label{thm:mpvanish}
Let $X$ be a smooth projective variety and $D$ an effective divisor on $X$ such that $X\setminus D_{\red}$ is affine (when $D$ is ample for instance), then
\[\hyp^i(X, gr^F_k\dr(\M_1)) = 0 \text{ for all } i>0, k\geq 0.\] 
\end{lemma}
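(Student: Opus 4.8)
The plan is to reduce the statement to Artin vanishing by means of the strictness of the Hodge filtration on the de Rham complex of a Hodge module. By \cite[Lemma 2.6]{MP18} the filtered $\Dmod$-module $\M_1$ is a direct summand of the filtered $\Dmod$-module underlying a Hodge module on $X$, so $(\M_1,F_\bullet)$ itself underlies a Hodge module. In particular the de Rham complex $\dr(\M_1)$ is strict with respect to $F_\bullet$, i.e.\ the Hodge--de Rham spectral sequence degenerates at $E_1$: the inclusions $F_k\dr(\M_1)\hookrightarrow\dr(\M_1)$ induce injections on hypercohomology, and each short exact sequence of complexes
\[0\to F_{k-1}\dr(\M_1)\to F_k\dr(\M_1)\to gr^F_k\dr(\M_1)\to 0\]
gives rise to short exact sequences
\[0\to\hyp^i\big(X,F_{k-1}\dr(\M_1)\big)\to\hyp^i\big(X,F_k\dr(\M_1)\big)\to\hyp^i\big(X,gr^F_k\dr(\M_1)\big)\to 0.\]
Since the Hodge filtration is bounded below we have $F_k\dr(\M_1)=0$ for $k\ll 0$, so it suffices to prove $\hyp^i(X,\dr(\M_1))=0$ for all $i>0$: an ascending induction on $k$ using the injectivity $\hyp^i(X,F_k\dr(\M_1))\hookrightarrow\hyp^i(X,\dr(\M_1))$ then yields $\hyp^i(X,F_k\dr(\M_1))=0$ for all $k$ and all $i>0$, and the displayed short exact sequence gives the claim for $gr^F_k$.

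For the remaining vanishing I would identify the ungraded de Rham complex with a constructible complex on $X$. Let $j\colon U:=X\setminus D_{\red}\hookrightarrow X$ be the open immersion. Locally $\M_1$ is the regular holonomic $\Dmod$-module $\OO_X(*D_{\red})h^{\beta}$, whose analytic de Rham complex is quasi-isomorphic to $\mathbf{R}j_*L[n]$, where $L$ is the rank-one (torsion, as $\beta\in\Q$) local system on $U$ determined by the multivalued function $h^{\beta}$. Hence
\[\hyp^i\big(X,\dr(\M_1)\big)\simeq\hyp^{i+n}\big(X,\mathbf{R}j_*L\big)\simeq H^{i+n}(U,L).\]
Because $U=X\setminus D_{\red}$ is affine of dimension $n$ (which holds when $D$ is ample, and is the hypothesis in general), Artin vanishing gives $H^m(U,L)=0$ for every $m>n$; taking $m=i+n$ with $i>0$ yields $\hyp^i(X,\dr(\M_1))=0$ for all $i>0$, completing the argument.

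The only genuinely nontrivial input is the strictness/$E_1$-degeneration statement, which is part of Saito's theory and applies here precisely because $\M_1$ is a direct summand of a Hodge module (cf.\ \cite[\S2.g]{Sai90} and the discussion preceding Lemma \ref{thm:svanish}); everything else is Artin vanishing and bookkeeping. I expect the point requiring the most care to be the precise identification of $\dr(\M_1)$ with $\mathbf{R}j_*L[n]$ — equivalently, that the perverse sheaf underlying $\M_1$ is the direct image of a local system on $U$ — together with keeping the cohomological shifts consistent with the convention that $\dr(\M_1)$ is placed in degrees $-n,\dots,0$.
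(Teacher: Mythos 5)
Your proof is correct and is essentially the argument behind the reference the paper cites for this lemma (\cite[Proposition~22.1]{MP16}): $E_1$-degeneration of the Hodge--de Rham spectral sequence for (a filtered direct summand of) a Hodge module reduces the claim to $\hyp^i(X,\dr(\M_1))=0$ for $i>0$, which follows from the identification $\dr(\M_1)\simeq \mathbf{R}j_*L[n]$ and Artin vanishing on the affine open $U=X\setminus D_{\red}$. The paper itself states the lemma without proof, so there is nothing further to compare.
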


\section{Proof of the vanishing theorem}
We will prove the following: 
 \setcounter{alphtheorem}{0}
\begin{variant}[Variant of Theorem \ref{main}\label{thm:main}]
With the hypotheses of Theorem \ref{main}, we have for all 
integers $k \geq 0$ the following equivalent statements hold.
\begin{enumerate}
\item $H^i\big(X,\omega_X((k + 1)D_{\red}) \otimes L\otimes I_k(D)\big) = 0$ for all $i > 0$.
\item $H^i\Big(X,\omega_X\otimes gr^F_k \M_1\otimes A\Big)= 0$ for all $i > 0$
where $A\otimes M(-D_{\red})=L$. 
\end{enumerate}
\end{variant}

The proof uses the vanishing
Lemmas \ref{thm:svanish} and \ref{thm:mpvanish}, 
followed by an inductive argument
involving a spectral sequence. The base 
case of the induction is Nadel vanishing for $I_0(D)$, 
which by Remark \ref{rmk:mult}, is the multiplier ideal
$\J((1-\epsilon)D)$. The argument is, in spirit, very similar to
the proof for projective spaces as in \cite[Theorem 25.3]{MP16}.

\begin{proof}
The filtration on $\M_1$ is given by
$F_k\M_1 \simeq \OO_X((k+1)D_{\red})\otimes M(-D_{\red}) \otimes I_k(D)$
and therefore we have a short exact sequence:
\[0\to \omega_X(kD_{\red})\otimes L \otimes I_{k-1}(D) \to \omega_X((k+1)D_{\red})\otimes L \otimes I_k(D) \to \omega_X\otimes gr^F_k\M_1\otimes A\to 0\] where $A\coloneqq L (D_{\red}-D)$.
Since by hypothesis $A$ is ample and $D\sim_{\Q} M$, 
 Nadel vanishing theorem gives,  
 \[H^i(X, \omega_X(D_{\red})\otimes L \otimes I_0(D))=0 \text{ for all }i>0.\] Therefore, by induction the above statements are equivalent.

We aim to show the second statement. 
We have, \begin{equation}
	gr_{k}^F\dr(\M_1)[-n]= [gr^F_k\M_1\to\Omega_X^{1}\otimes gr^F_{k-n+1}\M_1 \to\dots\to\Omega^{n-1}\otimes gr^F_{-1+k}\M_1 \to \omega_X\otimes gr^F_{k}\M_1]
\end{equation}
in degrees $0$ to $n$.

Consider the spectral sequence associated to the complex $C^{\bullet}:=  gr_{k}^F\dr(\M_1)[-n]\otimes A$:
\[E^{p,q}_1:= H^q(X, C^p) \Longrightarrow \mathbb{H}^{p+q}(X, {C}^{\bullet}).\]
Now by Saito vanishing (Lemma \ref{thm:svanish}), we have
$\mathbb{H}^{p+q}(X, {C}^{\bullet}) = 0$ when $p+q \geq n+1$.
Also note that, because of the length of $C^{\bullet}$, the spectral
sequence degenerates at level $n+1$, therefore, $E_{n+1}^{p,q} 
\simeq E_{\infty}^{p,q}$. The latter, $E_{\infty}^{p,q}$, a quotient of $\mathbb{H}^{p+q}(X, {C}^{\bullet})$, also vanishes
for $p+q \geq n+1$. 
As the vanishing statement that we aim for, is $E_1^{n,i}=0$ for all $i>0$. 
It is thus enough to
show that $E_1^{n,i} \simeq \dots \simeq E_{n+1}^{n,i}$.
In other words, it
is enough to show that, $E_r^{n-r,i+r-1}=0$ for all $r$ 
with $1\leq r\leq n$.
Indeed, for each $r\geq 1$, the differential going out of 
$E^{n,i}_r$ maps to $E_r^{n+r, i-r+1}$, which is 0 because
of the length of $C^{\bullet}$. In fact we will show that,
\[E_1^{n-r,i+r-1}= H^{i+r-1}(X, \Omega^{n-r}_X\otimes A \otimes gr_{k-r}^F\M_1)=0.\] 

By Lemma \ref{koz:toric}, $\Omega_X^{n-r}$ has the resolution:
\begin{multline*}
0\to \bigoplus\omega_X\to\dots\to\bigoplus\bigoplus_{\sigma\in S_{r-j}}\omega_X\otimes \OO_X\big(D_{\sigma_1}+\dots+D_{\sigma_{r-j}}\big)\\
\to\dots\to \bigoplus \bigoplus_{\sigma\in S_r}\omega_X\otimes \OO_X\big(D_{\sigma_1}+\dots+ D_{\sigma_r}\big)\to \Omega_X^{n-r}\to 0
\end{multline*}
where $S_j$ is the set of all ordered sequences of length $j$ in $\{1,\dots,d\}$. Tensoring this long exact sequence with $gr^F_{k-r}\M_1\otimes A$, 
we see that it is enough to show,
\[H^{i+r-1}\big(X, \omega_X\otimes A(D_{\sigma_1}+\dots+D_{\sigma_j})\otimes gr^F_{k-r}\M_1\big)=0\]
for all $j\leq r$. But this follows by induction, since $L'\coloneqq L(D_{\sigma_1}+\dots+D_{\sigma_j})$ with $j\leq r$ satisfies 
property \hyperlink{prop:pk}{$P_{k-r}(D)$} by our hypotheses. 

Further if $D$ and the $D_i$'s are ample, 
we can take $A\simeq \OO_X$. In this case, to begin the induction,
we resort to the Artin-type vanishing in Lemma \ref{thm:mpvanish} and 
obtain:
$$\mathbb{H}^{p+q}\big(X, C^{\bullet}\big)=0
\text{ for all } p+q\geq n+1.$$
The latter steps of the induction use Saito vanishing (Lemma \ref{thm:svanish}) with
$A'\coloneqq \OO_X(D_{\sigma_1}+\dots+D_{\sigma_j})$, which are automatically
ample due to the ampleness assumption on the $D_i$'s. The remaining argument follows
the first part of the proof verbatim.
\end{proof}

\section{Applications}
In this section the divisor
$D$ is reduced and effective.

We now apply Theorem \ref{main} to deduce Corollaries \ref{thm:1jet} and \ref{cor:jets}. In \cite[Corollary 27.2]{MP16} similar results for projective spaces were deduced.
The key point is to understand how deep the Hodge ideals sit
inside the maximal ideal of an isolated singular point.
This has already been studied in 
 [Theorem E, Corollary 19.4 \emph{loc.\ cit.}]. Without
proof we collect their results in the following:
\begin{lemma}\label{lem:mult}
Let $D$ be a reduced hypersurface on a smooth variety $X$ of dimension $n$. Let $x$ be an isolated singular point of $D$
with $\mult_x(D) =m$. Let $k$ be an integer such that 
$\frac{n}{k+1}<m< \frac{n}{k}$, then 
\[I_k(D) \subseteq \maxid_{x}^{(k+1)m-n}.\]
Further if $m \geq  \frac{n}{k}$, then 
\[I_k(D) \subseteq \maxid_x^{\ell} \quad\text{ where } \ell=\max\big\{m-1, (k+1)(m-2)-n-2\big\}\]
\end{lemma}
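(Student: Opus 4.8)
The plan is to deduce Lemma~\ref{lem:mult} essentially as a citation-driven packaging of [Theorem~E, Corollary~19.4 \emph{loc.\ cit.}], translating their statements into the numerical form above. Concretely, I would first recall that Musta\c{t}\u{a}--Popa compute, for an isolated singular point $x$ of a reduced hypersurface $D\subset X$ with $\mult_x(D)=m$, that the local structure of $I_k(D)$ at $x$ is governed by the Hodge ideals of the ordinary multiplicity-$m$ point, and in particular that $I_k(D)_x \subseteq \maxid_x^{t}$ for the explicit integer $t$ appearing in their Corollary~19.4. The first step is therefore to state precisely which of their bounds applies in the range $\tfrac{n}{k+1}<m<\tfrac{n}{k}$: in that regime $(k+1)m-n$ is a positive integer (since $m>\tfrac{n}{k+1}$ forces $(k+1)m>n$), and this is exactly the order of vanishing they exhibit, giving $I_k(D)\subseteq \maxid_x^{(k+1)m-n}$.

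For the second assertion, when $m\geq \tfrac{n}{k}$, I would invoke the complementary half of Corollary~19.4, which in this ``deeper'' range produces the bound $I_k(D)\subseteq \maxid_x^{(k+1)(m-2)-n-2}$ coming from their analysis of the generation level of the Hodge filtration. Separately, there is always the crude bound $I_k(D)\subseteq \maxid_x^{m-1}$, valid because $x$ is a singular point of a reduced divisor of multiplicity $m$ and the Hodge ideal is contained in the multiplier ideal $\J((1-\epsilon)D)$, which for an isolated point of multiplicity $m$ already lies in $\maxid_x^{m-1}$ (or, alternatively, because the Jacobian ideal vanishes to order $m-1$ and $I_k(D)\subseteq \maxid_x^{m-1}$ follows from the local description of Hodge ideals in terms of the reduced $\Dmod$-module). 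Taking the maximum of the two exponents gives $\ell=\max\{m-1,\,(k+1)(m-2)-n-2\}$ as claimed. Finally I would note that the containments are global statements about the ideal sheaf $I_k(D)$ localized at $x$, which is legitimate because $I_k(D)$ is independent of local choices (Remark~\ref{rmk:localglobal}) and because away from $x$ the containment is vacuous near the point in question.

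The only genuine subtlety — and the step I expect to require the most care in writing — is bookkeeping the inequalities so that the advertised exponents are in fact nonnegative integers and so that the stated ranges on $m$ match exactly the ranges in which Musta\c{t}\u{a}--Popa prove their bounds; in particular one must check that the hypothesis $\tfrac{n}{k+1}<m<\tfrac{n}{k}$ is the correct translation of their ``$k$ is the integer with $\lfloor n/m\rfloor = k$''-type condition, and that no off-by-one error creeps in when passing between $\lceil\,\cdot\,\rceil$, $\lfloor\,\cdot\,\rfloor$, and strict inequalities. Since the lemma is explicitly quoted ``without proof'' from \cite{MP16}, the honest writeup is short: cite the two parts of [Theorem~E, Corollary~19.4 \emph{loc.\ cit.}], record the elementary arithmetic verifying positivity of the exponents, and note the multiplier-ideal bound that supplies the $m-1$ term.
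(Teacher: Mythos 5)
Your overall strategy coincides with the paper's: Lemma~\ref{lem:mult} is stated there explicitly ``without proof,'' as a direct packaging of [Theorem~E, Corollary~19.4] of \cite{MP16}, and your citation-plus-bookkeeping is exactly what is intended. The arithmetic in your first case is also correct: for $\frac{n}{k+1}<m<\frac{n}{k}$ one has $(k+1)m-n\geq 1$ and $(k+1)m-n\leq m-1$, so the exponent $\min\{m-1,(k+1)m-n\}$ from Theorem~E reduces to $(k+1)m-n$.

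There is, however, a genuine error in your justification of the $m-1$ term in the second assertion. It is not true that one ``always'' has $I_k(D)\subseteq\maxid_x^{m-1}$, and the multiplier-ideal argument you offer for it fails: for a point of multiplicity $m$ the containment one gets is only on the order of $\J((1-\epsilon)D)\subseteq\maxid_x^{m-n}$, which is vacuous for $m\leq n$ (e.g.\ for a node on a surface in $\PP^3$ one has $I_0(D)=\J((1-\epsilon)D)=\OO_X$ locally, which is certainly not contained in $\maxid_x^{m-1}=\maxid_x$). The appeal to the Jacobian ideal is likewise not a valid substitute, since there is no general containment of $I_k(D)$ in an ideal of that order of vanishing. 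The correct source of the $m-1$ term is again Theorem~E itself: its bound is $I_k(D)\subseteq\maxid_x^{\min\{m-1,\,(k+1)m-n\}}$, and in the range $m\geq\frac{n}{k}$ of the second assertion one has $(k+1)m-n\geq m>m-1$, so the minimum is $m-1$ precisely there; Corollary~19.4 then supplies the competing exponent $(k+1)(m-2)-n-2$ for isolated singularities, and the maximum of the two gives $\ell$. So the lemma is indeed recoverable purely from the cited results, but the specific argument you wrote for the $m-1$ term would not survive scrutiny and should be replaced by the correct half of Theorem~E's minimum.
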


\begin{proof}[Proof of Corollary \ref{thm:1jet}]
Recall that $S_m(D)$ is the set of all isolated singular points on $D$ with multiplicity at least $m$.
	From Lemma \ref{lem:mult} we know that for $k=\left\lfloor{\frac{n}{m}}\right\rfloor$ and for all $p\in S_m(D)$, $I_k(D)_p\subseteq \maxid_p$.
 Therefore for a line bundle $L$ satisfying 
 \hyperlink{prop:pk}{$P_k$}, we have:
 \[H^1\Big(X, \OO_X\Big(\Big(\left\lfloor {\frac{n}{m}}\right\rfloor+1\Big)D - \sum_iD_i\Big)\otimes L \otimes I_k(D)\Big) = 0.\]
    This yields the following surjection
   \[H^0\Big(X, \OO_X\Big(\Big(\left\lfloor \frac{n}{m}\right\rfloor+1\Big)D - \sum_iD_i\Big)\otimes L\Big) \onto \bigoplus_{p\in S_m} \OO_X/\maxid_p\]
	 thereby proving the statement.
\end{proof}
Note that the choice of $k$ is optimal for this proof. For example, by \cite[Theorem D]{MP16}, when $p$ is an ordinary
isolated singular point of $D$ and 
$k\leq \lfloor\frac{n}{m}\rfloor -1$, locally around $p$ we have 
\[I_k(D)_p\simeq \OO_{X,p}.\]\\
The proof of separation of $(j-1)$-jets follows similarly. 
For the sake of completeness, we include it here.

\begin{proof}[Proof of Corollary \ref{cor:jets}]
	First note that $I_{k_{m,j}}(D) \subseteq \maxid_p^j$ for all $p\in S_m(D)$. Indeed, if $m'>m$, then $k_{m,j}=k_{m',j'}$ for $j'\geq j$
	and therefore, if $\mult_{p'}D=m'>m$ for some point $p'\in S_m(D)$, then $I_{k_{m,j}}(D) \subseteq \maxid_{p'}^{j'}\subseteq \maxid_{p'}^{j}$.
	Now for a line bundle $L$ satisfying $P_{k_{m,j}}$, we have the vanishing in Theorem \ref{main}
	for $I_{k_{m,j}}(D)$ and therefore
    we obtain a surjection:
    \[H^0\big(X, \OO_X\big((k_{m,j}+1)D - \sum_iD_i\big)\otimes L\big) \onto \bigoplus_{p\in S_m} \OO_X/\maxid_p^j.\]
		Hence the Corollary.
\end{proof}
\subsection*{Examples.} \label{ex:applications}
\begin{enumerate}
 \item 
 Let $X=\PP^{n_1}\times \PP^{n_2}$ and denote $n\coloneqq n_1+n_2$. Let $D$ be a reduced effective divisor 
of type $(c,d)$. 
Since the toric divisors on $X$ are 
nef, 
any ample line bundle, and in particular $L=\OO_X(1,1)$,
satisfies property \hyperlink{prop:pk}{$P_k$} 
for for all $k$. Therefore, $S_m(D)$
imposes independent conditions on the space of hypersurfaces 
\[H^0\left(X,\left(\Big(\left\lfloor\frac{n}{m}\right\rfloor+1\Big)c-n_1, 
\Big(\left\lfloor\frac{n}{m}\right\rfloor+1\Big)d-
n_2\right)\right).\] 
In particular, isolated singular points on a surface in the class $(c,d)$ on $X=\PP^2\times \PP^1$ 
impose conditions on $|\OO_X(2c-2, 2d-1)|$. This can be compared with the Severi-type 
bound as in \cite[Main Theorem]{PW06} for $\PP^3$, namely isolated singular points on a surface of degree $d$
in $\PP^3$ impose independent conditions on hypersurfaces of degree at least $2d-5$. \\
\item
Let $X=F_r$, the Hirzebruch surface with $r\geq 0$, as in
Example \ref{ex:fr} from the introduction. Note that the line bundle
 $L= \OO_X((rF+E))$ 
 satisfies \hyperlink{prop:pk}{$P_1$}. Moreover, we have $K_X\sim(r-2)F-2E$. Then for
any reduced effective singular divisor
$D$ in the class $(c,d)$, $S_2(D)$
imposes independent conditions on the 
space of hypersurfaces
\[H^0(X,\OO_X((2(c+r)-2)F+(2d-1)E)).\]

\end{enumerate}

\end{document}